\newtheorem{thm}{Theorem}
\newtheorem{pro}{Proposition}
\newtheorem{lem}{Lemma}
\newtheorem{dfn}{Definition}
\newcommand{\Area}{\operatorname{Area}}
\newcommand{\Vol}{\operatorname{Vol}}
\newcommand{\Log}{\operatorname{Log}}
\newcommand{\Arg}{\operatorname{Arg}}
\font\smc=cmcsc10
\title{Discriminant coamoebas in dimension two}
\author{Lisa Nilsson \& Mikael Passare}
\address{Department of Mathematics,
Stockholm University,
\hfill\break
\phantom{\hskip.54cm} SE-106 91  Stockholm, Sweden}
\email{lisa@math.su.se, passare@math.su.se}
\date{\today}
\begin{document}
\maketitle

\begin{abstract}

This paper deals with coamoebas, that is, images under coordinate\-wise argument mappings, of certain quite particular plane algebraic curves. These curves are the zero sets of reduced $A$-discriminants of two variables. We consider the coamoeba primarily as a subset of the torus $\mathbf{T}^2=(\mathbf{R}/2\pi\mathbf{Z})^2$, but also as a subset of its covering space 
$\mathbf{R}^2$, in which case the coamoeba consists of an infinite, doubly periodic image. In fact, it turns out to be natural to take multiplicities into account, and thus to treat the coamoeba as a chain in the sense of algebraic topology.

We give a very explicit description of the coamoeba as the union of two mirror images of a (generally non-convex) polygon, which is easily constructed from a matrix $B$ that represents the Gale transform of the original collection $A$.
We also give an area formula for the coamoeba, and we show that the coamoeba is intimately related to a certain zonotope. In fact, on the torus $\mathbf{T}^2$ the coamoeba and the zonotope together form a cycle, and hence precisely cover the entire torus an integer number of times. This integer is proved to be equal to the (normalized) volume of the convex hull of $A$.

\end{abstract}

\section{Introduction}

\noindent The objects that we study in this article are special algebraic curves defined by reduced, or inhomogeneous, $A$-discriminantal polynomials; and more specifically, the coamoebas of these $A$-discriminantal curves. Such a coamoeba is the image of the curve under the simple mapping that takes each complex coordinate to its argument. The notion of $A$-discriminants was originally introduced by Gelfand, Kapranov, and Zelevinsky, and the monograph \cite{GKZ} contains a thorough account of the subject. Here we just recall a few facts that will help put our results in proper perspective. The starting point is a finite configuration $A$ of points 
$\alpha_1,\alpha_2,\ldots,\alpha_N$ in $\mathbf{Z}^n$, and the corresponding family of Laurent polynomials
$$\phantom{------} f(x)=\sum_{k=1}^Na_k\,x^{\alpha_k}\ \in \ \mathbf{C}[x_1^{\pm 1},\ldots,x_n^{\pm 1}]$$
with exponent vectors from $A$. Granted some generically satisfied conditions, see \cite{GKZ}, the set of coefficients vectors 
$(a_1,\ldots, a_N)$, for which the hypersurface $\mathcal{Z}_f=\{\,x\in\mathbf{C}_*^n\,;\  f(x)=0\,\}$ is not a smooth manifold, coincides with the zero locus of an irreducible polynomial $D_A\in\mathbf{Z}[a_1,\ldots, a_N]$.
This \emph{$A$-discriminant} $D_A$ is uniquely determined up to sign, provided its coefficients are taken to be relatively prime, and its dependence on the point configuration $A$ is such that it remains invariant under $\mathbf{Z}$-affine isomorphisms.
\smallskip

As is customary in this context, we will make a slight abuse of notation, and denote by $A$ also the associated $(1+n)\times N$-matrix
\begin{equation}\label{amatrix}
A=\begin{pmatrix}
1 & \ldots  & 1 \\
\alpha_1 & \ldots &\alpha_N\\
\end{pmatrix},\,
\end{equation}
with the $\alpha_k$ here being viewed as column vectors.  We shall make two standard assumptions on the matrix $A$, and consequently also on the corresponding point configuration:

$\bullet\hskip.5cm \text{rank}(A)=1+n$, so in particular $N>n$;

$\bullet\hskip.4cm$ the maximal minors of $A$ are relatively prime.

\noindent The latter condition means that the columns of $A$ generate the full lattice $\mathbf{Z}^{1+n}$.
\smallskip

Each row vector of the matrix $A$ corresponds to a (quasi-)homogeneity of the $A$-discriminant, which means that $D_A$ can actually be considered as
a function of only $m:=N-n-1$ essential variables. An explicit method for dehomogenizing the $A$-discriminant consists in choosing a \emph{Gale transform} of $A$.
This is simply an integer $N\times m$-matrix $B$, whose column vectors form a $\mathbf{Z}$-basis for the kernel of the linear map represented by the matrix $A$.
The row vectors $b_1, b_2,\ldots, b_N$ of $B$ then constitute a point configuration in $\mathbf{Z}^m$ which is said to be a Gale transform of the original configuration $A$, see Section 5.4 of \cite{G}. Now, the column vectors of $B$ can be used to produce new inhomogeneous coordinates for $D_A$ by the rule
$$x_k=a_1^{b_{1k}}a_2^{b_{2k}}\cdots a_N^{b_{Nk}}\,,\quad k=1,\ldots, m\,.$$
More precisely, one has a formula $D_A(a_1,\ldots,a_N)=M(a)\,D_B(x_1,\ldots,x_m)$, where $M(a)$ is a monomial in the original $a$-variables  and $D_B$
is the corresponding \emph{reduced} $A$-discriminant. It is natural to denote this reduced discriminant by $D_B$, for it is the choice of the matrix $B$ that determines it. In fact, one could equally well start the whole theory from a $B$-matrix whose row vectors sum up to zero, and then take a Gale transform $A$ of the special form (\ref{amatrix}), which will be uniquely determined up to a $\mathbf{Z}$-affine isomorphism. Note that for this to work $B$ should not contain any row with only zeros.
\bigskip

In this paper we are going to focus on the case where $N=n+3$, so that $m=2$, and  the reduced $A$-discriminant is a function of
two variables. Before proceeding, let us look at three simple cases, which we will follow throughout the paper.
\bigskip

{\smc Examples.} ({\it i$\,$})\  Consider first the trivial $A$-matrix $A=(1\ 1\ 1)$, which amounts to the case of dimension $n=0$. It is then natural to define the corresponding $A$-discriminant to be the linear form $D_A=a_1+a_2+a_3$, and a suitable $B$-matrix for this case is given by
\vskip-.4cm
$$B=\left(\begin{array}{cccc}
 \!\!\!-1 & \!\!\! -1 \\
\,\,1 & \,\,0 \\
\,\,0 & \,\,1 
\end{array}\right).$$
Writing $ x_1=a_1^{-1}a_2$ and $x_2=a_1^{-1}a_3$, we then have $D_A(a)= a_1\,D_B(x)$ with $D_B(x)=1+x_1+x_2$.
\bigskip

({\it ii$\,$})\  Now let $n=1$ and let $A$ be the configuration $\alpha_k=k-1$ with $k=1,2,3,4$. That is, we have the matrix
$$A=\left(\begin{array}{cccc}
1 & 1 & 1 & 1\\
 0 & 1 & 2 & 3
\end{array}\right). $$
The corresponding $A$-discriminant is given by
$$D_A(a)=27a_1^2a_4^2+4a_1a_3^3+4a_2^3a_4-18a_1a_2a_3a_4-a_2^2a_3^2\,,$$ 
and we recognize it as the classical cubic discriminant that vanishes precisely when the algebraic equation $a_1+a_2x+a_3x^2+a_4x^3=0$
has some multiple root. A Gale transform of $A$ is
$$B=\left(\begin{array}{cccc}
\,\,1 & \,\,0 \\
\,\,0 & \,\,1 \\
 \!\!\!-3 & \!\!\! -2 \\
\,\,2 & \,\,1 
\end{array}\right),$$
and one has $D_A(a)=a_3^6a_4^{-2}\,D_B(x)$ with $x_1=a_1a_3^{-3}a_4^2$, $x_2=a_2a_3^{-2}a_4$, so the reduced $A$-discriminant is given by $D_B(x)=27x_1^2+4x_1+4x_2^3-18x_1x_2-x_2^2$.
\bigskip

({\it iii$\,$})\  As a third example we take the two-dimensional configuration $\alpha_1=(2,3)$, $\alpha_2=(1,2)$, $\alpha_3=(0,0)$, $\alpha_4=(2,1)$, and $\alpha_5=(3,2)$.
In this case one can choose the Gale transform to be
$$B=\left(\begin{array}{cccc}
\,\,1 &  \!\!\!-1 \\
 \!\!\!-1 & \,\,2 \\
 \,\,0 &  \!\!\!-2 \\
 \,\,1 & \,\,3 \\
 \!\!\!-1 & \!\!\! -2 
\end{array}\right).$$
The corresponding inhomogeneous coordinates are this time $x_1=a_1a_2^{-1}a_4a_5^{-1}$ and $x_2=a_1^{-1}a_2^2a_3^{-2}a_4^3a_5^{-2}$, and
the $A$-discriminant takes the form
$$D_A(a)=a_1a_2^3a_3^4a_4^{-3}a_5^7\,D_B(x)\,,$$
with the reduced $A$-discriminant $D_B$ given by
$$D_B(x)=1024x_2-1280x_1x_2-3125x_1^3+40x_1^2x_2+432x_1x_2^2+40x_1^3x_2\phantom{----}$$
\vskip-.6cm $$\phantom{-------------}-864x_1^2x_2^2-1280x_1^4x_2+432x_1^3x_2^2+1024x_1^5x_2\,.$$
\bigskip

The aim of this article is to describe the so-called coamoeba $\mathcal{A}_B'$ of an arbitrary reduced $A$-discriminantal curve $D_B(x_1,x_2)=0$. Let us therefore first define what is meant by the coamoeba of a general algebraic variety in the complex torus $\mathbf{C}_*^n$.
\bigskip
\begin{dfn}\label{coamdef}
{\rm Let $\mathcal{Z}\subset \mathbf{C}_*^n$ be an algebraic variety. The \emph{coamoeba} $\mathcal{A}'_{\mathcal{Z}}$ of $\mathcal{Z}$ is defined to be the subset of the real torus $\mathbf{T}^n=(\mathbf{R}/2\pi\mathbf{Z})^n$ given by $\mathcal{A}'_{\mathcal{Z}}=\Arg(\mathcal{Z})$, where $\Arg$ denotes the mapping
$$(x_1,x_2,\ldots,x_n)\mapsto\bigl(\arg(x_1),\arg(x_2),\ldots,\arg(x_n)\bigr).$$
}\end{dfn}

{\it Remark.} The coamoeba was originally introduced as a kind of dual object of the \emph{amoeba} $\mathcal{A}_{\mathcal{Z}}$ of $\mathcal{Z}$, which is analogously defined as 
$\mathcal{A}_{\mathcal{Z}}=\Log(\mathcal{Z})$, with $\Log(x)\!=\!\bigl(\log|x_1|,\ldots,\log|x_n|\bigr)$.  Some authors use the synonymous term alga instead of coamoeba. It is useful to think of the coamoeba also as ``lifted" to the covering space $\mathbf{R}^n$, thus regarding the argument mapping as being multivalued. 
\bigskip

A very efficient way of studying the zero locus of a reduced $A$-discriminant is to make use of the so-called Horn--Kapranov parametrization. In order to explain it, we first recall what is meant by the logarithmic Gauss mapping $\gamma$ defined on the regular part of complex hypersurface $\mathcal{Z}\subset\mathbf{C}_*^m$. In terms of a defining function $f$ for $\mathcal{Z}$ the logarithmic Gauss mapping is given analytically by the simple formula 
$$\mathrm{reg}\,\mathcal{Z}\owns x\mapsto\gamma(x)=[x_1\partial_1f(x):x_2\partial_2f(x):\ldots:x_m\partial_mf(x)]\in\mathbf{C}P_{m-1},\  \partial_k=\partial/\partial x_k,$$ 
and it admits the following geometric interpretation. Given a point  $x_0\in\mathrm{reg}\,\mathcal{Z}$, choose a local branch near $x_0$ of the logarithmic transformation 
$(x_1,\ldots,x_n)\mapsto (\log x_1,\ldots,\log x_n)$. Then $\gamma(x_0)\in\mathbf{C}P_{m-1}$ is equal to the complex normal direction of the transformed surface $\mathcal{Z}'$ at the corresponding point $x_0'$. It was proved by Kapranov in \cite{K} that if the hypersurface $\mathcal{Z}$ is defined by a reduced $A$-discriminant $D_B$, then the logarithmic Gauss mapping is birational, with a completely explicit inverse rational mapping $\Psi$.
\smallskip

\begin{dfn} {\rm The \emph{Horn--Kapranov parametrization} of the discriminant hyper\-surface $D_B(x)=0$ is the rational mapping $\Psi\colon\mathbf{C}P_{m-1}\to\mathbf{C}^m_*$ given by 
\begin{equation}\label{horn}
 \Psi[t_1:t_2\ldots:t_m]=\Big(\prod_{j=1}^{N}\langle b_j,t\rangle^{b_{j1}},\prod_{j=1}^{N}\langle b_j,t\rangle^{b_{j2}},\ldots,\prod_{j=1}^{N}\langle b_j,t\rangle^{b_{jm}}\Big)\,.\end{equation}
}
\end{dfn}

{\it Remark.} The most striking result in the Kapranov paper \cite{K} is that the property of having a birational logarithmic Gauss mapping actually characterizes the reduced $A$-discriminantal hypersurfaces. The precise content of this characterization requires some clarification, and we refer to \cite{CD} for a discussion on this matter.
\bigskip

{\smc Examples.} ({\it i$\,$})\  With $B$ given by the row vectors $b_1=(-1,-1)$, $b_2=(1,0)$, and $b_3=(0,1)$, one obtains the Horn--Kapranov mapping
$$\Psi[1:t]=\Bigl(-\frac{1}{1+t},-\frac{t}{1+t}\Bigr)\,,$$
and putting $x_1=-1/(1+t)$, $x_2=-t/(1+t)$ one sees that it does indeed parametrize the line $1+x_1+x_2=0$.
\bigskip

({\it ii$\,$})\  Taking $N=4$ and the matrix $B$ with row vectors $b_1=(1,0)$, $b_2=(0,1)$, $b_3=(-3,-2)$, and $b_4=(2,1)$, one constructs the parametrization
$$x_1=-\frac{(2+t)^2}{(3+2t)^3}\,,\quad x_2=\frac{t(2+t)}{(3+2t)^2}$$
of the reduced cubic discriminant curve $\,\,27x_1^2+4x_1+4x_2^3-18x_1x_2-x_2^2=0$.
\bigskip

({\it iii$\,$})\  Our example with $N=5$ is the $B$-matrix with row vectors $b_1=(1,-1)$, $b_2=(-1,2)$, $b_3=(0,-2)$, $b_4=(1,3)$, and $b_5=(-1,-2)$. The corresponding Horn--Kapranov parametrization is then
\begin{equation}\label{tre}
x_1=-\frac{(1-t)(1+3t)}{(-1+2t)(1+2t)}\,,\quad x_2=\frac{(-1+2t)^2(1+3t)^3}{(1+2t)^2(1-t)4t^2}\,,
\end{equation}
and one can verify that it does indeed satisfy $D_B(x_1,x_2)=0$, where $D_B$ is the rather complicated reduced $A$-discriminant we found in case ({\it iii$\,$}) above.
\bigskip

We have now set the stage for our investigations of the coamoebas of reduced $A$-discriminant hypersurfaces in two variables, or in other words, the coamoebas of plane algebraic curves of the special form $D_B(x_1,x_2)=0$, with the irreducible polynomial $D_B\in\mathbf{Z}[x_1,x_2]$ derived from an integer $N\times2$-matrix $B$ as explained above. 
\smallskip

Let us finish this introduction with some comments on the possible applications of our results on discriminant coamoebas. It was the study of the singularities of $A$-hypergeometric functions that motivated Gelfand, Kapranov, and Zelevinsky to introduce the $A$-discriminants and to produce the already classical monograph \cite{GKZ}. These $A$-hypergeometric functions appear as solutions to certain systems of differential equations that are naturally associated with a given point configuration $A$. It is thus not unexpected that coamoebas of $A$-discriminant curves should be related to the $A$-hypergeometric functions. In fact, the complement components of the discriminant coamoeba provide the exact convergence domains for certain natural Mellin-Barnes integral representations of the corresponding $A$-hypergeometric functions, see \cite{L}. Analogous results regarding discriminant amoebas and convergence domains for $A$-hypergeometric series were obtained in \cite{L} and \cite{PTs}. Another, and maybe more surprising, application of discriminant coamoebas is their appearance in modern theoretical physics. It turns out that exactly the type of coamoebas that we study here also play a role in certain linear sigma models occurring in string theory, see for instance the two-parameter models presented in Section~4.4 of \cite{HHP}.
\smallskip

Part of this work was carried out while both authors were participating in the MSRI program on Tropical Geometry in the fall of 2009. 
\bigskip

\section{Description of the coamoeba}

\noindent Let there be given an integer matrix $B$ of size $N\times 2$. It is assumed that no row vector of $B$ is equal to $(0,0)$, and also that the $2\times2$-minors of $B$ are relatively prime. As explained in the introduction, the row vectors of the matrix $B$ can be seen as representing a Gale transform of a point configuration $A=\{\alpha_1,\ldots,\alpha_N\}$ in $\mathbf{Z}^n$, with $n=N-3$. In particular, the corresponding matrix $A$, of the form (\ref{amatrix}), satisfies $AB=0$. Even though the configuration $A$ is not uniquely determined by $B$, it can only change by a $\mathbf{Z}$-affine isomorphism, so the volume of the convex hull $\text{conv}(A)$ is well defined. We shall denote by $d_B$ the normalized volume $n!\text{Vol}(\text{conv}(A))$. It is a positive integer.
\smallskip

In view of the Horn--Kapranov parametrization (\ref{horn}), for the case $m=2$, the coamoeba $\mathcal{A}_B'$ of the reduced $A$-discriminant curve $D_B(x_1,x_2)=0$ can be identified with the full image of the composed mapping $\Arg\circ\Psi\colon\mathbf{C}P_1\to\mathbf{T}^2=(\mathbf{R}/2\pi\mathbf{Z})^2$, given explicitly by
$$[1:t]\longmapsto \bigl(\,\sum_{j=1}^{N}b_{j1}\arg(b_{j1}+b_{j2}t)\,,\ \sum_{j=1}^{N}b_{j2}\arg(b_{j1}+b_{j2}t)\,\bigr)\,.$$
\smallskip

The mapping $\Arg\circ\Psi$ is generically finite, and we want to keep track of the number of preimages of various points of the torus $\mathbf{T}^2$. To this end, we shall consider the coamoeba $\mathcal{A}_B'$ as a (simplicial) chain in the sense of algebraic topology. Strictly speaking, this means that the underlying set of the coamoeba chain, which we still denote by $\mathcal{A}_B'$, is going to be the closure of the actual coamoeba. For our purposes this is not an important issue, but we briefly comment on the set-theoretical boundary of the coamoeba in a remark at the end of this section.
\smallskip

Our approach to the coamoeba chain $\mathcal{A}_B'$ is to specify a particular branch of the lifted mapping $\Arg\circ\Psi\colon\mathbf{C}P_1\to\mathbf{R}^2$, thus obtaining what we call the principal coamoeba chain $\mathcal{A}_0'\subset\mathbf{R}^2$, which is then mapped onto $\mathcal{A}_B'$ by the natural projection $\mathbf{R}^2\to\mathbf{T}^2$. We begin by describing a simplicial $1$-cycle $\Gamma_0\subset\mathbf{R}^2$, that is, an oriented polygonal curve, which will turn out to be the boundary of the principal coamoeba chain $\mathcal{A}_0'$. It will be practical for us to re-order the rows of our matrix $B$, if necessary, so as to have row vectors $b_j$ with decreasing normal slopes $\beta_j= -b_{j1}/b_{j2}$ as follows: 
$$\infty>\beta_1\ge\beta_2\ge\ldots\ge\beta_N\ge(-)\infty\,.$$
\vskip.15cm
\noindent The $1$-cycle $\Gamma_0$ is going to be the sum of two cycles $\Gamma_+$ and $\Gamma_-$, which are located symmetrically with respect to their common starting point $p_0$.
This point $p_0$ is one of the four points $(0,0)$, $(0,\pi)$, $(\pi,0)$, or $(\pi,\pi)$, depending on the signs of the components of $\Psi[1:t]$ for large real values of $t$.
\smallskip

\begin{dfn}\label{argo}
{\rm Let $[\beta_N,\beta_1]$ be the real (possibly infinite) line segment that also contains the remaining points $\beta_{N-1},\ldots,\beta_2$. The \emph{principal branch} $[\Arg\circ\Psi]_0$ on the simply connected surface $\mathbf{C}P_1\setminus[\beta_N,\beta_1]$ is defined to be the continous branch of $\Arg\circ\Psi$ that takes one of the values $(0,0)$, 
$(0,\pi)$, $(\pi,0)$, or $(\pi,\pi)$ for real $t>\beta_1$.
}
\end{dfn}

The construction of the $1$-cycle $\Gamma_+$ now goes as follows: place the dilated first vector $\pi b_1$ at the starting point $p_0=[\Arg\circ\Psi]_0(t)$, $t>\beta_1$; then place the next vector $\pi b_2$ at the new point $p_0+\pi b_1$, and continue in this fashion so that the last vector closes the cycle at the point $p_0+\pi b_1+\pi b_2+\ldots+\pi b_N=p_0$. If there are some vectors, say $b_k, b_{k+1},\ldots,b_{k+\ell}$, that are parallel, then they should be replaced by their sum $b_k+b_{k+1}+\ldots+b_{k+\ell}$ during this process. This will clearly not change the homology of the resulting $1$-cycle $\Gamma_+$. The cycle $\Gamma_-$ is then constructed in complete analogy, by starting at $p_0$ and placing the opposite vectors $-\pi b_1,-\pi b_2,\ldots,-\pi b_N$ one after another. Finally, we put $\Gamma_0=\Gamma_++\Gamma_-$.
\bigskip

{\smc Examples.} ({\it i$\,$})\  In our first example case we should re-order the vectors to get $b_1=(0,1)$, $b_2=(-1,-1)$, and $b_3=(1,0)$. Then we obtain $\beta_1=0$, $\beta_2=-1$, and $\beta_3=\infty$. Since we have $\Psi[1:t]=\bigl(-1/(1+t),-t/(1+t)\bigr)$ with both entries being negative for real $t>0$, we find that the starting point $p_0$ for the cycles $\Gamma_\pm$ is given by $(\pi,\pi)$. The triangular shaped cycle $\Gamma_+$ is depicted on the left in Figure~1.
\bigskip

({\it ii$\,$})\  The prescribed new ordering of the vectors $b_k$ on our second example is $b_1=(0,1)$, $b_2=(-3,-2)$, $b_3=(2,1)$, and $b_4=(1,0)$. Recalling the Horn--Kapranov mapping $\Psi$ for this case, we see that it has its first component negative, and the second component positive, for $t>0=\beta_1$. This time the starting point $p_0$ is thus given by $(\pi,0)$, as indicated in the lower center of  Figure~1.
\bigskip

({\it iii$\,$})\  The row vectors $b_k$ from our third example turn out to already have the correct ordering, with normal slopes $\beta_1=1$, $\beta_2=1/2$, $\beta_3=0$, $\beta_4= -1/3$, and $\beta_5=-1/2$. Plugging in a real value $t>1$ in the parametrization (\ref{tre}), we see that $x_1$ becomes positive, and $x_2$ negative. Therefore we have $p_0=(0,\pi)$ in this case. On the right in Figure~1 we have drawn the cycle $\Gamma_+$ for this third example.

\begin{figure}[H]\label{fig1}
\begin{center}
\vskip-.4cm
\includegraphics[width=10cm]{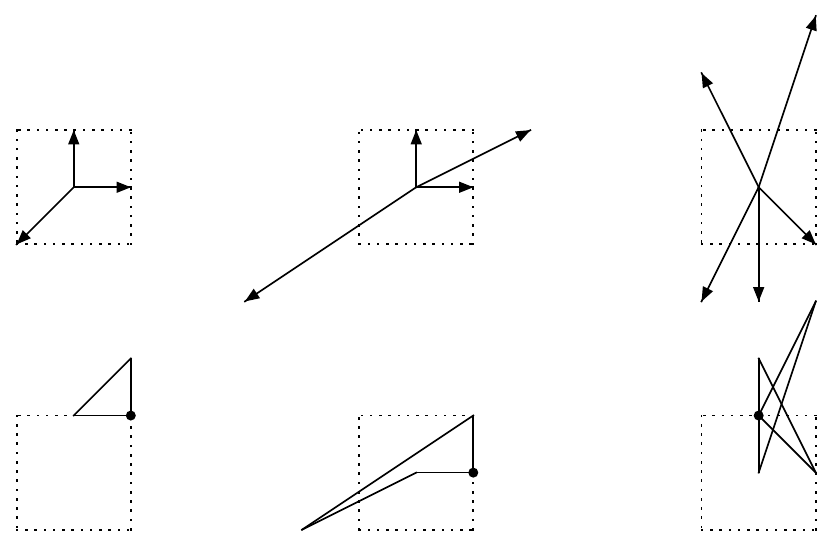}
\end{center}
\vskip.2cm
\caption{\it The vectors $\pi b_k$ (upper row) and the cycle $\Gamma_+$ with the starting point $p_0$ indicated (lower row) from our three cases. The dashed square is centered at the origin and has side length $2\pi$.}
\end{figure}

\begin{dfn}
{\rm
 The \emph{principal coamoeba chain} is given by $\mathcal{A}'_0=\mathcal{A}'_++\mathcal{A}'_-$ where $\mathcal{A}'_+$ and $\mathcal{A}'_-$ denote the images (counting multiplicities) under the principal branch mapping $[\Arg\circ\Psi]_0$ of the upper and lower half planes $\{\mathrm{Im}\, t> 0\}$ and $\{\mathrm{Im}\, t<0\}$ respectively.
}
\end{dfn}

{\it Remark.} Note that if we map the principal coamoeba chain $\mathcal{A}'_0$ to the torus by the canonical projection $ \mathbf{R}^2\to\mathbf{T}^2$ we recover the full coamoeba chain $\mathcal{A}'_B$. 

\begin{thm}\label{sats1}
The principal coamoeba chain $\mathcal{A}'_0$ has the boundary $\partial\mathcal{A}'_0=\Gamma_0$. More precisely, for each point $p\in\mathbf{R}^2\setminus\Gamma_0$, the winding number of $\Gamma_0$ with respect to $p$ is equal to the number of preimages of $p$ under the principal branch mapping $[\Arg\circ\Psi]_0$.
\end{thm}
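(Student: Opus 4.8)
The plan is to analyze the principal branch mapping $[\Arg\circ\Psi]_0$ directly on the boundary of its domain $\mathbf{C}P_1\setminus[\beta_N,\beta_1]$, that is, on the real line, and to show that it traces out exactly the cycle $\Gamma_0$. First I would observe that the upper half plane $\{\Im t>0\}$ and the lower half plane $\{\Im t<0\}$ are each simply connected subsets of $\mathbf{C}P_1\setminus[\beta_N,\beta_1]$, so the principal branch is single-valued and holomorphic there, and the chains $\mathcal{A}'_\pm$ are honest pushforwards of the fundamental classes of the half-planes. Hence $\partial\mathcal{A}'_0=\partial\mathcal{A}'_++\partial\mathcal{A}'_-$ is the pushforward of the boundaries of the two half-planes, which are the real axis traversed in opposite directions. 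So the whole problem reduces to understanding the image under $[\Arg\circ\Psi]_0$ of the extended real line, approached from above and from below.

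The key computation is the behaviour of $\arg(b_{j1}+b_{j2}t)$ as $t$ runs along the real axis. For real $t$ the quantity $b_{j1}+b_{j2}t$ is real, so its argument is $0$ or $\pi$ (or $\pm$ something, depending on the lift), and it jumps by $\pm\pi$ exactly as $t$ crosses the root $t=\beta_j=-b_{j1}/b_{j2}$; the sign of the jump is determined by $\mathrm{sgn}(b_{j2})$ together with the direction of approach ($\Im t\to 0^+$ versus $\Im t\to 0^-$). Consequently, as $t$ increases through the real line starting from $t>\beta_1$, the point $[\Arg\circ\Psi]_0(t)=\bigl(\sum_j b_{j1}\arg(b_{j1}+b_{j2}t),\sum_j b_{j2}\arg(b_{j1}+b_{j2}t)\bigr)$ is piecewise constant and jumps, each time we pass a slope $\beta_j$, by exactly $\pm\pi b_j$. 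Ordering the $\beta_j$ decreasingly means we encounter the roots in the order $\beta_1,\beta_2,\ldots,\beta_N$; I would check that with the principal branch taken from the $+$ side the successive jumps are precisely $-\pi b_1,-\pi b_2,\ldots,-\pi b_N$ (and from the $-$ side, the opposite signs $+\pi b_j$), and that since $\sum_j b_j=0$ (as $AB=0$ forces the first row of $A$ to annihilate $B$) the walk closes up after traversing all of $\mathbf{R}$, returning to $p_0$. This is exactly the inductive construction of $\Gamma_+$ and $\Gamma_-$ in the text; parallel $b_j$'s simply get their jumps added, matching the stipulated replacement by sums. Thus the boundary real axis, traversed once in each direction (for the two half-planes), maps onto $\Gamma_++\Gamma_-=\Gamma_0$, giving $\partial\mathcal{A}'_0=\Gamma_0$.

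For the sharper statement about winding numbers, I would invoke the standard degree-theoretic principle: if $h\colon H\to\mathbf{R}^2$ is a proper holomorphic (hence orientation-preserving, away from a discrete critical set) map from a half-plane, then for any $p$ not in the image of $\partial H$ the number of preimages $\#h^{-1}(p)$ equals the winding number of $h|_{\partial H}$ about $p$; this is just the argument principle applied to $h-p$, or equivalently the statement that $h_*[H]$ as a $2$-chain has the boundary $h_*[\partial H]$ and its coefficient at a point is the local degree. Summing this identity over the upper and lower half-planes, the coefficient of $\mathcal{A}'_0$ at a point $p\notin\Gamma_0$ — which is by definition the number of preimages of $p$ under $[\Arg\circ\Psi]_0$ counted with multiplicity — equals the winding number of $\Gamma_0=\Gamma_++\Gamma_-$ about $p$. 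The main obstacle, and the place where care is genuinely needed, is the behaviour near the ends of the segment $[\beta_N,\beta_1]$ and near $t=\infty$: one must make sure the map extends continuously (or at least properly) to these boundary points of the half-planes, that the two one-sided limits at each $\beta_j$ really differ by $\pm\pi b_j$ with the signs as claimed, and that the point at infinity contributes nothing spurious to the boundary chain. Verifying properness — equivalently, that no "mass escapes to infinity" on $\mathbf{C}P_1$, so that the pushforward $2$-chains and their boundaries behave as in the compact case — is where I expect to spend the most effort; once that is in hand, the rest is the bookkeeping of the $\pm\pi b_j$ jumps, which is exactly the construction of $\Gamma_0$.
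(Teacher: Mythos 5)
Your boundary analysis is essentially sound and matches the paper's strategy in spirit: the paper also traces the image of the real axis, using small circular indentations of radius $\delta$ at the $\beta_k$ and a large semicircle of radius $R$, and shows the image curve shadows $\Gamma_+$ with error $O(\delta)+O(R^{-1})$, confirming the $\pm\pi b_j$ jumps you describe. But there is a genuine gap in the second half of your argument, and it is where the real work of the theorem lies.

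You invoke ``the argument principle applied to $h-p$'' and call $h=[\Arg\circ\Psi]_0$ ``proper holomorphic (hence orientation-preserving, away from a discrete critical set).'' The map $[\Arg\circ\Psi]_0$ is \emph{not} holomorphic: $\Psi$ is holomorphic, but $\Arg$ is the componentwise imaginary part of a logarithm, and composing a holomorphic map with $\Arg$ gives a map $\mathbf{C}\to\mathbf{R}^2$ whose Jacobian is not controlled by the Cauchy--Riemann equations. In fact the paper shows the opposite of what holomorphy would predict: the Jacobian $J$ of $[\Arg\circ\Psi]_0$ is \emph{negative} in the upper half-plane, i.e.\ the map is orientation-\emph{reversing} there (and positive/orientation-preserving in the lower half-plane). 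The degree-theoretic counting you want still works, but only once one knows that $J$ is nonvanishing with constant sign in each open half-plane; this is precisely the ingredient your proposal takes for free. The paper obtains it by a non-trivial route: by a result of Mikhalkin, the critical points of $\Arg$ restricted to the curve are $\gamma^{-1}(\mathbf{R}P_1)$ where $\gamma$ is the logarithmic Gauss map, and since the Horn--Kapranov parametrization $\Psi$ is inverse to $\gamma$, one concludes $J\neq0$ whenever $\Im t\neq0$; the sign is then pinned down by an explicit computation of $\det(b_1,r_\delta(\theta))$ along the first indentation. Once that is in place, the paper subdivides a slightly shrunk upper half-disc into patches where the map is bijective and sums local degrees, which is the honest version of the count you wanted to get from the argument principle.

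Also, your concern about ``mass escaping to infinity'' is a non-issue here: the domain $\mathbf{C}P_1$ is compact, and the indentation-plus-large-semicircle contour handles both the points $\beta_k$ and $t=\infty$ uniformly. The place where effort is genuinely needed is not properness but the sign of the Jacobian, which you should not assume follows from holomorphy.
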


\begin{proof}
It will be enough to prove the second statement in the theorem, for it implies the first one. We therefore fix a point $p$ in $\mathbf{R}^2\setminus\Gamma_0$. Let us make the temporary assumption that no two vectors $b_k$ are parallel, so that the values $\beta_1, \beta_2,\ldots , \beta_N$ are all different. We deal first with the image $\mathcal{A}_+'$ of the upper half plane and with the corresponding cycle $\Gamma_+$. For positive constants $\delta$ and $R$, with $\delta$ small and $R$ large, we shall consider the oriented curve $C_{\delta, R}$ in the closed upper half plane 
$\{\text{\rm Im}\,t\ge0\}$ which starts at the point $R$, then runs leftwards along the real axis, with small circular indentations of radius $\delta$ centered at the points $\beta_k$, to the point $-R$, and finally follows the half circle of radius $R$ back to the starting point. We denote by $\Gamma_{\delta, R}$ the curve in $\mathbf{R}^2$ which is the image of $C_{\delta, R}$ under the principal branch mapping $[\text{Arg}\circ\Psi]_0$.\smallskip

We shall now prove that, for all sufficiently small $\delta$ and sufficiently large $R$, the point $p$ is contained in the complement $\mathbf{R}^2\setminus\Gamma_{\delta, R}$, and  the winding numbers of the two curves  $\Gamma_{\delta, R}$ and $\Gamma_+$ with respect to $p$ are the same. First we note that, since $R$ is assumed to be large, the curve 
$\Gamma_{\delta, R}$ has the same starting point $p_0$ as the polygonal curve $\Gamma_+$. Then, as the curve parameter starts moving along the real axis, the corresponding point on the curve $\Gamma_{\delta, R}$ remains fixed until the parameter reaches the first indentation near the point $\beta_1$. Let us parametrize this circular indentation arc explicitly by $t(\theta)=\beta_1+\delta e^{i\theta}\!$, $0\le\theta\le\pi$. We then have
$$[\text{Arg}\circ\Psi]_0(t(\theta))-[\text{Arg}\circ\Psi]_0(t(0))=(b_{11}\theta, b_{12}\theta)+ r_\delta(\theta)\,,$$
with the remainder term equal to
\begin{equation}\label{beta1}
r_\delta(\theta)=\biggl[\text{arg}_0\prod_{k\ne1}\!\Bigl(1+\frac{b_{k2}b_{12}\,\delta e^{i\theta}}{B_{k1}}\Bigr)^{b_{k1}}\!\!,\ \
\text{arg}_0\prod_{k\ne1}\!\Bigl(1+\frac{b_{k2}b_{12}\,\delta e^{i\theta}}{B_{k1}}\Bigr)^{b_{k2}}\biggr].\end{equation}
Here $B_{k1}=\det(b_k,b_1)$ and $\text{arg}_0$ denotes the principal branch of the argument that vanishes for positive real numbers. It is clear that $r_\delta=O(\delta)$ and $r_\delta(0)=r_\delta(\pi)=0$, so this part of $\Gamma_{\delta,R}$ is a curve running very close to the first line segment in $\Gamma_+$, and having the same end points as this segment. In particular, our chosen point $p$ is not on this part of the curve $\Gamma_{\delta,R}$  and the argument changes along the two curve pieces seen from $p$ are exactly the same.
\smallskip

Continuing in this manner along the curve $C_{\delta, R}$, we may proceed similarly at all the small indentations to find that the image curve $\Gamma_{\delta, R}$ will closely follow along the consecutive edges of the polygonal curve $\Gamma_+$. Having thus arrived at the point $-R$ on the curve $C_{\delta, R}$, it just remains to follow the large half circle back to the starting point $R$. Assume first that $b_{N2}=0$, so that the last singularity $\beta_N$ is located at infinity. Parametrizing the large half circle by $t(\theta)=-Re^{-i\theta}$, for $0\le\theta\le\pi$, we then get
$$[\text{Arg}\circ\Psi]_0(t(\theta))-[\text{Arg}\circ\Psi]_0(t(0))=(b_{N1}\theta, b_{N2}\theta)+ s_R(\theta)\,,$$
with 
$$
s_R(\theta)=\biggl[\text{arg}_0\prod_{k\neq N}\!\Bigl(1\!+\!\frac{b_{k1}R^{-1}(1\!-\!e^{i\theta})}{b_{k2}-b_{k1}R^{-1}}\Bigr)^{b_{k1}\!\!\!},\ \
\text{arg}_0\prod_{k\neq N}\!\Bigl(1\!+\!\frac{b_{k1}R^{-1}(1\!-\!e^{i\theta})}{b_{k2}-b_{k1}R^{-1}}\Bigr)^{b_{k2}}\biggr].$$
Here we see that  $s_R(0)=s_R(\pi)=0$, and that $s_R=O(R^{-1})$ as $R\to\infty$. Again we conclude that the corresponding part of the image curve $\Gamma_{\delta,R}$
follows closely along the final segment of $\Gamma_+$, thus producing an equal argument change as seen from the fixed point $p$.
If on the other hand  $b_{N2}\ne0$, then moving along the half circle from $-R$ to $R$ just results in making a small loop at the vertex $p_0$, which gives no additional change in arguments. Hence we have shown that the curve $\Gamma_{\delta,R}$ follows the  curve $\Gamma_+$ arbitrarily closely, and that for any chosen point $p$ in the complement of 
$\Gamma_+$, the two curves both have the same winding number with respect to $p$. 
\smallskip

In order to relate the winding number of the image curve $\Gamma_{\delta,R}$ around the point $p$ to the number of preimages of $p$ in the upper half plane, we shall consider the Jacobian $J$ of the principal branch mapping. It is not hard to prove, see for instance \cite{M}, that the set of critical points for the mappings $\Log$ and $\Arg$ on a plane algebraic curve is equal to the inverse image $\gamma^{-1}(\mathbf{R}P_1)$, where $\gamma$ as before denotes the logarithmic Gauss mapping. Since our Horn--Kapranov parametrization $\Psi$ is inverse to $\gamma$, it follows that the Jacobian $J$ is nonvanishing at all non-real points $[1:t]\in\mathbf{C}P_1$. In fact, we claim that one has $J(t)<0$ for $\text{Im}\,t>0$ and vice versa. To see that the principal branch mapping $[\Arg\circ\Psi]_0$ is indeed orientation-reversing in the upper half plane, we observe that the curve 
$C_{\delta, R}$ is negatively oriented with respect to its interior, so we just have to prove that its image $\Gamma_{\delta,R}$ is instead a positively oriented boundary. This will follow if we show that the image of a point $t$ keeps to the left of the polygonal curve $\Gamma_+$ as $t$ moves along $C_{\delta, R}$. For this it suffices to show that 
$\det(b_1,r_\delta(\theta))>0$ for all $0<\theta<\pi$, with $r_\delta$ from (\ref{beta1}). Some straightforward computations now firstly show that 
$$r_\delta(\theta)\ =\ \biggl[\,\sum_{k>1}\frac{b_{k1}b_{k2}b_{12}\,\delta\sin\theta}{B_{k1}}\,,\ \sum_{k>1}\frac{b_{k2}^2b_{12}\,\delta\sin\theta}{B_{k1}}\biggr]\ +\ O(\delta^2)\,,$$
and secondly that the determinant $\det(b_1,r_\delta(\theta))$ therefore equals $b_{12}^2\,\delta\sin\theta+O(\delta^2)$. Since $\delta$ is small and $b_{12}\ne0$, this proves our claim. Observing that the Jacobian $J$ is an odd function with respect to $\text{Im}\,t$, we also see that $J(t)>0$ for $\text{Im}\,t<0$.
\smallskip

The fact that $J$ is non-zero with constant sign in the upper half plane allows us to count preimages of $p$ as desired. Indeed, we continuously shrink the curve $C_{\delta, R}$ slightly, without  letting its image curve hit the point $p$, so the new curve bounds a compact region in the open upper half plane. Then we subdivide this region into finitely many small patches on which our mapping is bijective. The negatively oriented boundaries of these patches then get mapped to positively oriented closed curves in $\mathbf{R}^2$ having winding numbers zero or one with respect to the point $p$, according to whether $p$ belongs to the image or not. This shows that the winding number of $\Gamma_+$ around $p$ indeed coincides with the number of preimages of $p$ in the upper half plane, and that we have $\partial\mathcal{A}_+'=\Gamma_+$. To study the image 
$\mathcal{A}_-'$ of the lower half plane we argue analogously, using instead the curve obtained by reflecting $C_{\delta, R}$ in the real axis. Notice that this new curve is positively oriented with respect to its interior. Since $J>0$ in the lower half plane, this means that $\Gamma_-$ is also positively oriented with respect to $\mathcal{A}_-'$.
\smallskip

In the above arguments we had made the assumption that there were no parallel vectors among the rows $b_j$. If there are some vectors, say $b_k, b_{k+1},\ldots,b_{k+\ell}$, that are parallel, then the arguments just need a very slight modification. The starting point $p_0$ of the cycle $\Gamma_+$ will still coincide with the image of the starting point $R$ of the contour $C_{\delta, R}$, but as we follow the $\delta$-indentation at the singularity $\beta_k$, which now coincides with $\beta_{k+1}=\ldots=\beta_{k+\ell}$, the image curve will run along the edge of $\Gamma_+$ corresponding to the summed vector $b_k+b_{k+1}+\ldots+b_{k+\ell}$. This means that the image curve $\Gamma_{\delta,R}$ will be istotopic to $\Gamma_+$ in $\mathbf{R}^2\setminus\{p\}$ just as before.
\end{proof}

{\it Remark.} With Theorem \ref{sats1} we have obtained a precise description of the principal coamoeba chain $\mathcal{A}_0'$ including information about multiply covered points. But what about the actual set-theoretic coamoeba? It differs from the (closed) underlying set of the coamoeba chain in that it does not contain all its boundary points. In fact, it should have become clear from the proof of Theorem \ref{sats1} that the real intervals of $\mathbf{R}P_1\setminus\{\beta_1,\ldots,\beta_N\}$ get mapped to the vertices of the polygonal curve $\Gamma_0=\Gamma_++\Gamma_-$. (See the examples in Figure~2, where  these vertices are clearly marked.) The edges of $\Gamma_0$ however, are not attained in the image of the principal branch mapping $[\Arg\circ\Psi]_0$, for they correspond to the singular points $\beta_k$. This means that, set-theoretically, the principal coamoeba is equal to the interior of the underlying set of the coamoeba chain, but with the vertices of $\Gamma_0$ added to it.
\smallskip

\begin{figure}[H]\label{fig2}
\begin{center}
\vskip-.2cm
\includegraphics[width=10cm]{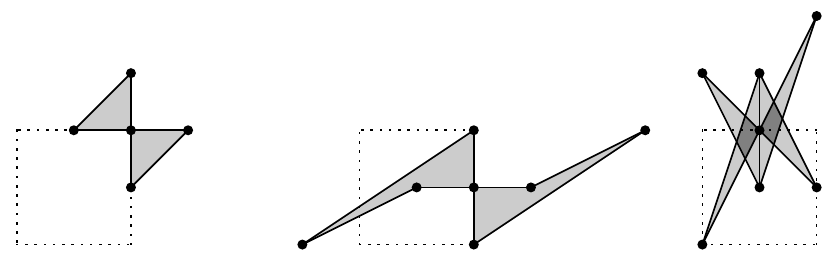}
\end{center}
\vskip.2cm
\caption{\it The principal coamoeba chains $\mathcal{A}_0'$ for the running examples. Observe the darker doubly covered areas in the third case.}
\end{figure}

\section{Associated zonotopes and area formulas}

\noindent Having obtained a clear picture of the principal coamoeba chain $\mathcal{A}_0'\subset\mathbf{R}^2$, whose image under the canonical projection to the torus $\mathbf{T}^2$
is equal to the true coamoeba chain $\mathcal{A}_B'$, we now introduce another, much simpler polygonal chain $\Pi_0$, which is also defined in terms of the given matrix $B$. In fact, $\Pi_0$ is nothing but the closed convex polygon obtained as the Minkowski sum 
$$ \Pi_0=S_1+S_2+\ldots+S_N$$
of the line segments $S_k=\mathrm{conv}\bigl\{(0,0),\pi b_k\bigr\}$, where the $b_k$ as before denote the row vectors of $B$. Being a Minkowski sum of line segments, the polygon $\Pi_0$ is by definition a so-called \emph{zonotope}. As all zonotopes it is symmetric with respect to its center, and the fact that the vectors $b_k$ sum to zero implies that the center of $\Pi_0$ is located at the origin. 
The positively oriented boundary of $\Pi_B$ can be obtained by placing the vectors $\pm\pi b_k$ cyclically after one another in counterclockwise order.

\begin{figure}[H]\label{fig3}
\begin{center}
\vskip0cm
\hskip.8cm\includegraphics[width=10.4cm]{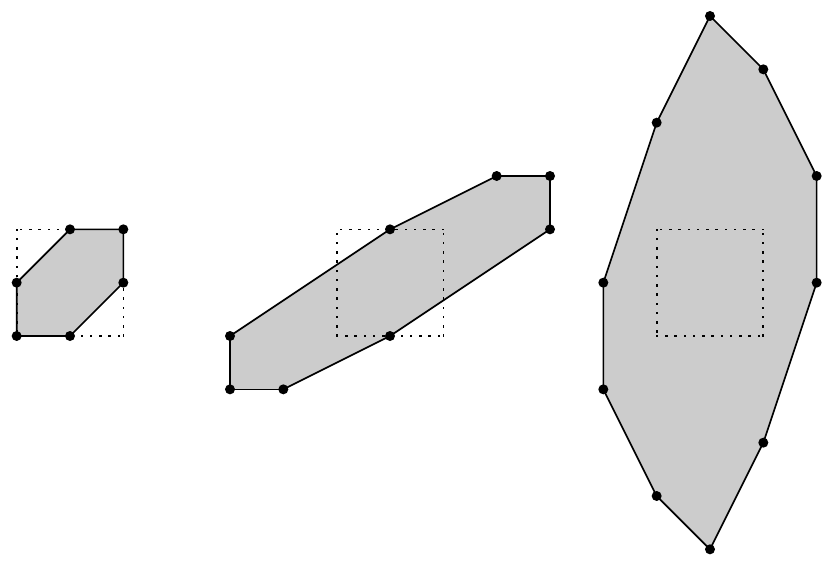}
\end{center}
\vskip0cm
\caption{\it The zonotopes $\Pi_0$ for our three running examples.}
\end{figure}

\noindent A dual representation of the zonotope $\Pi_0$ in terms of linear inequalities is given by
$$\Pi_0=\Bigl\{\,\theta\in\mathbf{R}^2\,;\ \big|\langle\theta,\xi\rangle\big|\le\frac{\pi}{2}\sum_{k=1}^N\big|\langle b_k,\xi\rangle\big|\,,\  \text{for all}\ \xi\in\mathbf{R}^2\,\Bigr\}\,.$$
Observe that there are in fact only finitely many inequalities involved here, one for each normal direction $\xi=(b_{k2},-b_{k1})$.
\smallskip

Our next objective is to compute the area both of the zonotope $\Pi_0$ and of the principal coamoeba chain $\mathcal{A}_0'$. The area of $\Pi_0$ poses no problem, 
because there is a well known formula for the volume of a general zonotope, see for instance \cite{S}, which in our setting takes the form
\begin{equation}\label{areazonotope}\Area\,(\Pi_0)=\pi^2\sum_{j<k}|\det(b_j,b_k)|\,.\end{equation} 
\vskip-.1cm \noindent Since we are taking absolute values in the above sum, the ordering of the vectors is immaterial here. 
\smallskip

We then want to derive a similar formula for the area of the principal coamoeba chain $\mathcal{A}_0'$. As we have seen, the principal coamoeba can overlap itself, so we really should take the multiplicities into account when we calculate its area.  Let us define precisely what we mean by the area of a (simplicial) chain.

\begin{dfn}
{\rm The \emph{area} of a simplicial $2$-chain $\sum_{j\in J}m_j\,\sigma_j$ is defined to be $\sum_{j\in J}m_j\Area\,(\sigma_j)$, where $m_j$ is the multiplicity of the component 
$\sigma_j$.}
\end{dfn}

This definition allows also for negative multiplicities, and such negatively signed areas will actually appear in our proof of the following result. Recall that we have ordered the row vectors $b_j$ of the matrix $B$ in a clockwise projective manner, that is, so that the slope of each vector is smaller than that of the preceding one.

\begin{pro}\label{areacoam} 
Let the vectors $b_j$ be ordered clockwise projectively. The area of the principal coamoeba chain is then given by 
\begin{equation}\label{areacoamoeba}\Area\,(\mathcal{A}'_0)=\pi^2\sum_{j<k}\det(b_j,b_k)\,.
\end{equation}
\end{pro}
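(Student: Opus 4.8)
The plan is to compute the signed area of the principal coamoeba chain $\mathcal{A}_0'$ directly from its boundary $\partial\mathcal{A}_0'=\Gamma_0$, using the fact (established in Theorem~\ref{sats1}) that the multiplicity of $\mathcal{A}_0'$ at a generic point $p$ equals the winding number of $\Gamma_0$ about $p$. Indeed, for any oriented $1$-cycle $\Gamma$ in $\mathbf{R}^2$ one has the classical identity
$$\int_{\mathbf{R}^2}\mathrm{wind}(\Gamma,p)\,dp\ =\ \frac12\oint_\Gamma(x\,dy-y\,dx)\,,$$
and the left-hand side is precisely $\Area(\mathcal{A}_0')$ in the sense of the definition above, since the chain $\mathcal{A}_0'$ is the sum of the bounded regions of $\mathbf{R}^2\setminus\Gamma_0$ each weighted by its winding number. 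So the whole computation reduces to evaluating the shoelace integral over the explicit polygonal cycle $\Gamma_0=\Gamma_++\Gamma_-$.

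First I would treat $\Gamma_+$ alone. Recall it is the closed polygon with vertices $p_0,\ p_0+\pi b_1,\ p_0+\pi b_1+\pi b_2,\ \ldots$, returning to $p_0$ (parallel vectors being summed, which changes nothing). For a closed polygon with successive edge vectors $v_1,\dots,v_N$ (here $v_j=\pi b_j$) issuing from a vertex $q_0$, the shoelace formula gives signed area $\frac12\sum_{j<k}\det(v_j,v_k)$ — the base point $q_0$ drops out because $\sum v_j=0$. Hence $\Gamma_+$ encloses signed area $\frac{\pi^2}{2}\sum_{j<k}\det(b_j,b_k)$; the clockwise-projective ordering of the $b_j$ is exactly what makes each term $\det(b_j,b_k)$ the correct signed contribution, i.e. this ordering is the convexity order for the fan of the $b_j$ restricted to a half-plane. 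Next, $\Gamma_-$ is obtained from $\Gamma_+$ by negating all edge vectors while keeping the same starting point $p_0$; this is the point reflection of $\Gamma_+$ through $p_0$ (up to a relabeling/reversal that does not affect enclosed signed area), so $\Gamma_-$ encloses the same signed area as $\Gamma_+$. Adding, $\Area(\mathcal{A}_0')=\Area(\Gamma_+)+\Area(\Gamma_-)=\pi^2\sum_{j<k}\det(b_j,b_k)$, which is (\ref{areacoamoeba}).

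The step that needs the most care — and where the clockwise-projective hypothesis really earns its keep — is verifying that the oriented polygon $\Gamma_+$, as constructed by laying the $\pi b_j$ head to tail, actually has the orientation for which the shoelace sum $\frac{\pi^2}{2}\sum_{j<k}\det(b_j,b_k)$ is the \emph{winding-number-weighted} area rather than merely its absolute value. This is precisely the orientation point settled inside the proof of Theorem~\ref{sats1}, where it was shown that $[\Arg\circ\Psi]_0$ is orientation-reversing on the upper half plane and that $\Gamma_+=\partial\mathcal{A}_+'$ with the correct (positive) orientation; so I would invoke that, and likewise invoke the companion statement for $\Gamma_-=\partial\mathcal{A}_-'$. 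The only remaining subtlety is that for a non-convex $\Gamma_+$ some summands $\det(b_j,b_k)$ with $j<k$ can be negative (this is exactly the source of the negatively-signed areas mentioned before the Proposition, and of the darker doubly-covered region in example (iii)); but the shoelace identity is an algebraic identity valid regardless of convexity, so no separate treatment of the non-convex case is required — one simply keeps the signs. I expect the non-convexity bookkeeping, i.e. convincing the reader that the shoelace formula with signs still computes $\int \mathrm{wind}$, to be the main obstacle, and I would address it by the patch-subdivision argument already used at the end of the proof of Theorem~\ref{sats1}.
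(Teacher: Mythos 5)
Your proof is correct and takes essentially the same route as the paper: both reduce to the signed area of the edge-vector polygon $[b_1,\dots,b_N]$, and the paper's triangle decomposition via the auxiliary vectors $\hat b_k=b_k+\cdots+b_N$ is in fact exactly the shoelace decomposition (with apex at the origin) that you invoke, while your $\Gamma_-$-by-point-reflection step matches the paper's translation-and-rescaling argument giving the factor $2\pi^2$. The only stylistic difference is that you front-load the reduction to $\int\mathrm{wind}(\Gamma_0,p)\,dp$ via Green's theorem, which the paper leaves implicit after citing $\partial\mathcal{A}_0'=\Gamma_0$ from Theorem~\ref{sats1}.
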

 
\begin{proof}
Given any vectors $v_1,v_2,\ldots,v_r$ in $\mathbf{R}^2$, we introduce the notation $[v_1,v_2,\ldots,v_r]$ for the oriented polygonal curve obtained by placing the vectors one after another, starting by putting $v_1$ at the origin. In this notation we have 
$$\Gamma^\pm=p_0\pm[\pi b_1,\pi b_2,\ldots,\pi b_N]\,,$$
 except that if there are some parallel vectors with opposite signs among the $b_j$, we get on the right hand side some harmless extra edges that are traced in both directions.
Making a translation and re-scaling by $1/\pi$, we thus see that the area of the principal coamoeba chain $\mathcal{A}_0'$ that we wish to find, is equal to $2\pi^2$ times the area of the chain bounded by the $1$-cycle $[b_1,b_2,\ldots,b_N]$. We shall now compute this area by summing the signed area of certain oriented triangles.
\smallskip

Let us write $\hat b_j$ for the vector $b_j+b_{j+1}+\ldots+b_N$, and consider the $1$-cycle
$$[-\hat b_2, b_2,\hat b_3,-\hat b_3, b_3,\hat b_4,-\hat b_4, b_4,\hat b_5,\ldots, -\hat b_{N-1}, b_{N-1},\hat b_N]\,.$$
Since $b_1=-\hat b_2$ and $b_N=\hat b_N$, we see that this new cycle is just the extension of the cycle  $[b_1,b_2,\ldots,b_N]$ obtained by inserting the two extra 
edges $\hat b_k$ and $-\hat b_k$ between the original edges $b_{k-1}$ and $b_k$, for $k=3,\ldots, N-1$.
\smallskip

Obviously, the winding number with respect to each point in the complement of this curve is the same as for the original cycle $[b_1, b_2,\ldots, b_N]$. 
Notice that each triplet $[-\hat b_k, b_k,\hat b_{k+1}]$ can be written $[b_1+\ldots+b_{k-1},b_k,b_{k+1}+\ldots+b_N]$, and since the total sum $b_1+b_2+\ldots+b_N$ is equal to zero, the triplet is a cycle that bounds an oriented triangle $T_k$. The signed area of such a triangle $T_k$ is given by $\det(-\hat b_k,b_k)/2$, and we conclude that 
$$\text{Area}\,(\mathcal{A}_0')=2\pi^2\sum_{k=2}^{N-1}\text{Area}\,(T_k)=\pi^2\sum_{k=2}^N\det(b_1+\ldots+b_{k-1},b_k)=\pi^2\sum_{j<k}\det(b_j,b_k),$$
where we have used the fact that $\det(b_1+\ldots+b_{N-1},b_N)=\det(-b_N,b_N)=0$, and the last equality follows from the bilinearity of the determinant.
\smallskip

Notice also that the expression $\sum_{j<k}\det(b_j,b_k)$ is invariant under cyclic permutations. Indeed, it is enough to check that the sum does not change if we take the vectors in the new order $b_N,b_1,b_2,\ldots,b_{N-1}$, and then it is only the contribution from determinants containing $b_N$ that matters. But this contribution is zero for both the original and the new ordering, since we have
$$-\sum_{j=1}^{N-1}\det(b_N,b_j)=\sum_{j=1}^{N-1}\det(b_j,b_N)=\det(b_1+\ldots+b_N,b_N)=\det(0,b_N)=0\,.$$
\end{proof}
\bigskip

\section{The coamoeba and its complement}

\noindent We are going to project both the principal coamoeba chain $\mathcal{A}_0'$ and the zonotope $\Pi_0$ to the torus $\mathbf{T}^2=(\mathbf{R}/2\pi\mathbf{Z})^2$ by means of the canonical projection $\mathbf{R}^2\to\mathbf{T}^2$. The image of $\mathcal{A}_0'$ will be the coamoeba chain $\mathcal{A}_B'$, which is our main object of study, and the image of $\Pi_0$ will be an easily described chain that we denote by $\Pi_B$.
\smallskip

Before actually performing the projection, let us first determine the mutual location of the principal coamoeba $\mathcal{A}_0'$ and the zonotope $\Pi_0$. We have already located the common starting point $p_0$ for the two boundary cycles $\Gamma_\pm$ of $\mathcal{A}_0'$. Let us now also fix a starting point $q_0$ on the boundary of the zonotope 
$\Pi_0$. We let this $q_0$ be the ``southwesternmost" vertex of $\Pi_0$, that is, the sum of all vectors $\pi b_j$ with strictly negative second coordinate plus the sum of those $\pi b_j$ that have zero second coordinate and negative first coordinate. The following simple result shows that the points $p_0$ and $q_0$ get projected to the same point on the torus $\mathbf{T}^2$.
\bigskip

{\smc Lemma.} {\it
Let $p_0$ and $q_0$ denote the initial vertices of $\mathcal{A}_0'$ and $\Pi_0$ as explained above. Then one has $p_0\equiv q_0\equiv -q_0\ (\text{\rm mod}\ 2\pi\mathbf{Z}^2\,)$.}

\begin{proof} Notice first that, since $q_0$ is a point in the denser lattice $\pi\mathbf{Z}^2$, it is clear that the difference $q_0-(-q_0)=2q_0$ belongs to $2\pi\mathbf{Z}^2$, and hence that $q_0\equiv -q_0$. Now, from the way in which the vertex $q_0$ was chosen, it follows that its coordinates are given by
$$q_{01}\ =\ \pi\!\!\sum_{b_{j2}<0}b_{j1}\ +\ \pi\!\!\sum_{\substack{b_{j2}=0\\ b_{j1}<0}}b_{j1}\quad \text{and} \quad q_{02}\ =\ \pi\!\!\sum_{b_{j2}<0}b_{j2}\,.$$
The vertex $p_0$ on the other hand, was defined as the value of the principal branch mapping $[\Arg\circ\Psi]_0$ at a large positive value of $t$.
Recalling the form (\ref{horn}) of the Horn--Kapranov mapping $\Psi$, we see that, modulo the lattice $2\pi\mathbf{Z}^2$, the coordinates of $p_0$ are given by 
$$\phantom{-}p_{01}\ =\ \arg\bigl(\prod_{b_{j2}<0} b_{j2}^{b_{j1}}\ \times \prod_{\substack{b_{j2}=0\\ b_{j1}<0}} b_{j1}^{b_{j1}}\bigr)\quad \text{and} \quad p_{02}\ =\ \arg\bigl(\prod_{b_{j2}<0} b_{j2}^{b_{j2}}\bigr)\,.$$
Since the argument of a negative number is equal to $\pi$ modulo $2\pi$, we see that the above coordinate expressions for $q_0$ and $p_0$ do indeed coincide modulo the lattice.
\end{proof}
\smallskip

Consider now the natural projection $\mathbf{R}^2\to\mathbf{T}^2$, and the images $\mathcal{A}_B'$ and $\Pi_B$ of the chains $\mathcal{A}_0'$ and $\Pi_0$ respectively.
Let us also introduce the notation $$\det\!^{+}(b_j,b_k)=\mathrm{max}\{\det(b_j,b_k),0)\}.$$
\begin{figure}[H]\label{fig4}
\begin{center}
\vskip0cm
\includegraphics[width=12cm]{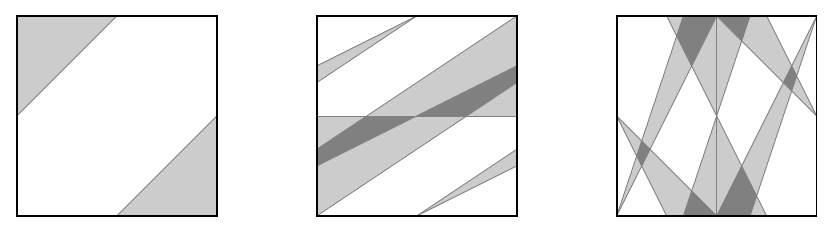}
\end{center}
\vskip0cm
\caption{\it The coamoeba chains $\mathcal{A}_B'$ for our recurring examples drawn in the torus $\mathbf{T}^2\!$. The lightly shaded regions are covered once and the darker areas are doubly covered.}
\end{figure}
\vskip-.4cm

Our next result reveals a close complementary relationship between the coamoeba and the zonotope.
\smallskip

\begin{thm}\label{sats2} The summed chain $\mathcal{A}'_B+\Pi_B$ is a cycle, and hence equal to $m_B\mathbf{T}^2$, with some integer multiplicity $m_B$. In fact, provided that the vectors $b_k$ are ordered clockwise projectively, this multiplicity is given by the formula
$$m_B=\frac{1}{2}\sum_{j<k}\det\!^+(b_j,b_k)\,.$$
\end{thm}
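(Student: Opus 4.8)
The plan is to combine the two area formulas already established with a degree/multiplicity argument on the torus. First I would argue that $\mathcal{A}_B'+\Pi_B$ is a cycle: the boundary of $\mathcal{A}_0'$ is $\Gamma_0=\Gamma_++\Gamma_-$ by Theorem \ref{sats1}, and the boundary of the zonotope $\Pi_0$, traversed as described, is the closed polygon obtained by placing $\pi b_1,\ldots,\pi b_N$ and then $-\pi b_1,\ldots,-\pi b_N$ in an order that makes it convex. Both $\partial\Gamma_+$-type pieces and $\partial\Pi_0$ are built from exactly the same multiset of edge vectors $\{\pm\pi b_k\}$, merely placed in different orders, and by the Lemma they start at points that project to the same point of $\mathbf{T}^2$. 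Hence on $\mathbf{T}^2$ the projected $1$-chains $\partial\mathcal{A}_B'$ and $\partial\Pi_B$ are homologous; in fact one checks they are equal with opposite signs (or that their sum bounds), because a sum of closed loops made of the same translated edge vectors is null-homologous on the torus. Therefore $\partial(\mathcal{A}_B'+\Pi_B)=0$, so this $2$-chain is a cycle on $\mathbf{T}^2$, and since $H_2(\mathbf{T}^2)\cong\mathbf{Z}$ is generated by the fundamental class $[\mathbf{T}^2]$, we get $\mathcal{A}_B'+\Pi_B=m_B\,\mathbf{T}^2$ for some integer $m_B$.

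To pin down $m_B$, I would take areas. Since $\mathbf{T}^2$ has total area $(2\pi)^2=4\pi^2$, applying the area functional to the identity $\mathcal{A}_B'+\Pi_B=m_B\mathbf{T}^2$ gives $\Area(\mathcal{A}_B')+\Area(\Pi_B)=4\pi^2 m_B$, where areas are counted with multiplicity. Now $\Area(\mathcal{A}_B')=\Area(\mathcal{A}_0')=\pi^2\sum_{j<k}\det(b_j,b_k)$ by Proposition \ref{areacoam} (the projection to the torus is locally area-preserving and multiplicity-preserving). For the zonotope, projecting $\Pi_0$ to the torus is also locally area-preserving, but $\Pi_0$ wraps around, so $\Area(\Pi_B)$ equals $\Area(\Pi_0)=\pi^2\sum_{j<k}|\det(b_j,b_k)|$ as a signed-with-multiplicity quantity only if the projection is injective up to measure zero; more carefully I should note that $\Pi_B$ as a chain has $\Area(\Pi_B)=\pi^2\sum_{j<k}|\det(b_j,b_k)|$ because the zonotope decomposes into parallelograms $S_j+S_k$ of area $\pi^2|\det(b_j,b_k)|$ each counted once. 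Adding, $4\pi^2 m_B=\pi^2\sum_{j<k}\bigl(\det(b_j,b_k)+|\det(b_j,b_k)|\bigr)=2\pi^2\sum_{j<k}\det{}^+(b_j,b_k)$, whence $m_B=\tfrac12\sum_{j<k}\det{}^+(b_j,b_k)$, as claimed.

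The main obstacle I anticipate is making the chain-level bookkeeping on the torus rigorous: one must be precise that $\partial\mathcal{A}_B'+\partial\Pi_B$ really vanishes as a $1$-cycle (not merely is null-homologous), which requires matching orientations and the starting-point computation of the Lemma carefully, and one must correctly handle the sign/multiplicity conventions so that $\Area$ of the zonotope chain comes out with $|\det|$ while the coamoeba chain comes out with signed $\det$. A clean way to sidestep some of this is a pointwise argument: show that for a generic $\theta\in\mathbf{T}^2$ the number of preimages in $\mathcal{A}_B'$ plus the number in $\Pi_B$ is a constant $m_B$ independent of $\theta$ — the preimages in $\mathcal{A}_0'$ (winding numbers of $\Gamma_0$) differ from minus the covering multiplicity of the zonotope by a lattice-translation-invariant amount, because $\Gamma_0$ and $\partial\Pi_0$ differ only by reordering edges — and then evaluate $m_B$ at a single convenient point, or equivalently integrate over $\mathbf{T}^2$ to recover the area identity above. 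Either route reduces to the same two ingredients: the homological triviality of a loop assembled from a fixed edge multiset, and the two area formulas \eqref{areazonotope} and \eqref{areacoamoeba}.
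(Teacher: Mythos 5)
Your plan follows the paper's route quite closely: show $\partial(\mathcal{A}'_B+\Pi_B)=0$ on $\mathbf{T}^2$, conclude it is $m_B\mathbf{T}^2$, and determine $m_B$ by dividing the total area by $4\pi^2$ using formulas (\ref{areazonotope}) and (\ref{areacoamoeba}). Your second half (the area computation) is exactly the paper's.

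The gap is in the first half, and it is a real one. You argue that because $\Gamma_0$ and $\partial\Pi_0$ are built from the same multiset of edge vectors $\{\pm\pi b_k\}$ and start at congruent points, the projected $1$-chains must agree up to sign. That inference is false. A closed polygonal loop built from a fixed multiset of edge vectors depends on the \emph{order} in which they are laid down: the edge $\pi b_k$ starts at a partial sum of the preceding vectors, and reordering changes that partial sum by an element of $\pi\mathbf{Z}^2$ which need not lie in $2\pi\mathbf{Z}^2$, so the edges can land at genuinely different places on $\mathbf{T}^2$. (For instance, with $\pi\mathbf{Z}^2\subset\mathbf{R}^2/2\pi\mathbf{Z}^2$ and vectors $(\pi,0),(0,\pi),(-\pi,0),(0,-\pi)$, one ordering traces a small square while another ordering gives the zero chain.) Moreover, ``null-homologous'' is not the relevant condition: $\partial\mathcal{A}'_B+\partial\Pi_B=\partial(\mathcal{A}'_B+\Pi_B)$ is automatically a boundary, so that says nothing; what is needed is that it vanishes as a $1$-chain.

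What makes the cancellation work is precisely the alignment between the clockwise projective ordering of the $b_j$ used to build $\Gamma_0$ and the cyclic ordering of the edges on the zonotope boundary. The paper proves the vanishing by an induction: after the Lemma gives $p_0\equiv q_0\equiv -q_0$, the edges $\pm\pi b_1$ of $\Gamma_0$ emanating from $p_0$ coincide, reversed, with the two $\pm\pi b_1$ edges of $\partial\Pi_0$ terminating at $\pm q_0$; the four resulting neighboring vertices are again all congruent (they differ pairwise by $2\pi b_1\in 2\pi\mathbf{Z}^2$), so the argument repeats with $b_2,\ldots,b_N$. The parallel-vector case needs a separate short check because the coamoeba uses the signed sum $b_k'$ of a parallel block while the zonotope uses the unsigned sum $b_k''$, and one must verify that the leftover pieces $\pm\pi(b_k''-b_k')$ are closed loops on $\mathbf{T}^2$ that cancel each other. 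You flag this obstacle yourself; the point is that it is not a matter of polish but the core of the theorem, and the ``same edge multiset'' heuristic does not close it.

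Your alternative pointwise route (comparing winding numbers of $\Gamma_0$ and $\partial\Pi_0$ for a generic $\theta$) reduces to the same missing ingredient, since the claim that they differ by a lattice-invariant constant again rests on the false ``reordering'' principle. Once the inductive edge cancellation is supplied, everything else in your write-up is correct and matches the paper.
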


\begin{proof}
In order to prove that $\mathcal{A}'_B+\Pi_B$ is a cycle we are going to show that the oriented boundary segments of $\mathcal{A}_0'$ and $\Pi_0$ cancel each other out pairwise when mapped to the torus. From the above lemma we know that the starting points $p_0$ and $\pm q_0$ get projected onto the same point in the torus. Let us now follow the respective boundary cycles segment by segment, and verify that there is a cancellation as claimed.
\smallskip

To begin with, we treat the case with no parallel vectors among the $b_k$. The boundary cycle $\Gamma_0$ then has an oriented edge given by the vector $\pi b_1$ going \emph{from} the vertex $p_0$ to the neighboring vertex $p_0+\pi b_1$, and another oriented edge given by the vector $-\pi b_1$ going from $p_0$ to $p_0-\pi b_1$. On the zonotope on the other hand, we have an oriented boundary edge given by one of the two vectors 
$\pm\pi b_1$ but now \emph{arriving at} the vertex $q_0$ from the neighboring vertex $q_0\mp\pi b_1$, and another oriented edge given by the opposite vector $\mp\pi b_1$ arriving at $-q_0$ from $-q_0\pm\pi b_1$. From this description it is clear that each of the two coamoeba edges, when mapped to the torus, exactly coincides with one reversely oriented edge from the zonotope.
\smallskip

Now we note that the four neighboring vertices $p_0\pm\pi b_1$ and $\pm(q_0\pm\pi b_1)$ mentioned above are again mapped to one single point on the torus, so we can repeat the same reasoning, starting at these points and cancelling oriented edges given by $\pm\pi b_2$. Continuing in the same manner we thus establish that the boundaries of the projected chains $\mathcal{A}_B'$ and $\Pi_B$ indeed sum up to zero.
\smallskip

In the presence of parallel vectors, the argument needs to be adapted somewhat. Suppose for instance that the vectors $b_k, b_{k+1},\ldots,b_{k+\ell}$ are parallel, but the next vector $b_{k+\ell+1}$ has a different slope. This implies that we can write $b_{k+j}=r_jb_k$, for some rational numbers $r_j$. Let us introduce the notation $b_k'$ for the summed vector $b_k+b_{k+1}+\ldots+b_{k+\ell}=(1+r_1+\ldots+r_\ell)b_k$, and $b_k''$ for the possibly longer vector $(1+|r_1|+\ldots+|r_\ell|)b_k$. As before there will be four vertex points, two on the coamoeba and two on the zonotope, that all correspond to the same torus point, but this time with the property that the two coamoeba vertices have the vectors $\pi b_k'$ and $-\pi b_k'$ as oriented edges emanating from them, while the two zonotope vertices have the vectors $\pi b_k''$ and $-\pi b_k''$ terminating at them.
\smallskip

Unless all the parallel vectors $b_k, b_{k+1},\ldots,b_{k+\ell}$ have the same direction, so that all the $r_j$ are positive, the edges of the coamoeba and the reversely oriented, but longer, edges of the zonotope will no more cancel each other out completely. However, the two remaining edge pieces from the zonotope boundary will be of the form 
$\pm\pi(b_k''-b_k')=\pm2\pi(r_1^++\ldots+r_\ell^+)b_k$, where $r_j^+=\max(r_j,0)$, and this means that on the torus they both represent the same \emph{closed} curve, but with opposite orientations. They therefore cancel each other out, and again we conclude that the boundary of $\mathcal{A}_B'+\Pi_B$ vanishes.
\smallskip

Now, the only $2$-cycles on the torus $\mathbf{T}^2$ are integral multiples of $\mathbf{T}^2$ itself, and since our chains are positively oriented we must therefore have 
$\mathcal{A}_B'+\Pi_B=m_B\mathbf{T}^2$, for some positive integer $m_B$. In order to find this multiplicity $m_B$ we note that the area (with multiplicities) is preserved under the projection $\mathbf{R}^2\to(\mathbf{R}/2\pi\mathbf{Z})^2.$ This means that we can compute $m_B$ as the area of $\mathcal{A}_B'+\Pi_B$ divided by the area $4\pi^2$ of the torus. Using the area formulas (\ref{areazonotope}) and (\ref{areacoamoeba}) for the zonotope $\Pi_0$ and for the principal coamoeba chain $\mathcal{A}_0'$ respectively, we obtain
$$m_B=[\pi^2\sum_{j<k}\det(b_j,b_k)+\pi^2\sum_{j<k}|\det(b_j,b_k)|]/4\pi^2
=\frac{1}{2}\sum_{j<k}\det\!^+(b_j,b_k)\,.$$
\end{proof}

{\smc Examples.} ({\it i$\,$})\  In our first example case the appropriately re-ordered vectors are $b_1=(0,1)$, $b_2=(-1,-1)$, and $b_3=(1,0)$, and hence the area computations  become
$$
\pi^{-2}\text{Area}\,(\mathcal{A}'_B)=
\left|\begin{matrix}
\,\,\,\,0 & 1\\
-1 & \!\!\!\!-1\end{matrix}\,\right|+
\left|\,\begin{matrix}
0 & 1\\
1 & 0
\end{matrix}\,\right|+
\left|\begin{matrix}
-1 & \!\!\!\!-1\\
\,\,\,\,1 & 0
\end{matrix}\,\right|= 1-1+1=1\,;\phantom{---}
$$

$$\pi^{-2}\text{Area}\,(\Pi_B)=
\Big|\left|\begin{matrix}
\,\,\,\,0 & 1\\
-1 & \!\!\!\!-1
\end{matrix}\,\right|\Big|+
\Big|\left|\,\begin{matrix}
0 & 1\\
1 & 0
\end{matrix}\,\right|\Big|+
\Big|\left|\begin{matrix}
-1 & \!\!\!\!-1\\
\,\,\,\,1 & 0
\end{matrix}\,\right|\Big|
= 1+1+1=3\,.
$$
\smallskip

\noindent We see in Figures 3 and 4 that the combined chain $\mathcal{A}'_B+\Pi_B$ precisely covers the torus $\mathbf{T}^2$, so that in this case $m_B=1$, which agrees with the formula from Theorem~\ref{sats2}:
$$m_B=\frac{1}{2}\sum_{j<k}\det{\!}^+(b_j,b_k)=\frac{1}{2}\big(1+0+1\big)=1\,.$$
\smallskip

({\it ii$\,$})\  The second example consists of the vectors $b_1=(0,1)$, $b_2=(-3,-2)$, $b_3=(2,1)$, and $b_4=(1,0)$. This time we obtain the areas 

$$
\pi^{-2}\text{Area}\,(\mathcal{A}'_B)=
\left|\begin{matrix}
\,\,\,\,0 & 1\\
-3 & \!\!\!\!-2\end{matrix}\,\right|+
\left|\,\begin{matrix}
0 & 1\\
2 & 1
\end{matrix}\,\right|+
\left|\,\begin{matrix}
0 & 1\\
1 & 0
\end{matrix}\,\right|+
\left|\begin{matrix}
-3 & \!\!\!\!-2\\
\,\,\,\,2 & 1
\end{matrix}\,\right|+
\left|\begin{matrix}
-3 & \!\!\!\!-2\\
\,\,\,\,1 & 0
\end{matrix}\,\right|+
\left|\,\begin{matrix}
2 & 1\\
1 & 0
\end{matrix}\,\right|=
$$
$$= 3-2-1+1+2-1=2\,;\phantom{---}\pi^{-2}\text{Area}\,(\Pi_B)=
3+2+1+1+2+1=10\,.
$$
The area of $\mathcal{A}'_B+\Pi_B$ is thus equal to $12\pi^2$ and dividing by the area of the torus $\mathbf{T}^2$, we get the multiplicity $m_B=12\pi^2/4\pi^2=3$.

\bigskip\bigskip

\smallskip

({\it iii$\,$})\  In our third and final case we have $b_1=(1,-1)$, $b_2=(-1,2)$, $b_3=(0,-2)$, $b_4=(1,3)$, and $b_5=(-1,-2)$. Computing the areas we now get

$$\pi^{-2}\text{Area}\,(\mathcal{A}'_B)=
\left|\,\begin{matrix}
\,\,\,\,1 & \!\!-1\\
-1 & \,\,2
\end{matrix}\,\right|+
\left|\begin{matrix}
\,1 & \!-1\\
\,0 & \!-2
\end{matrix}\,\right|+
\left|\begin{matrix}
\,1 &  \!\!-1\\
\,1 & \,\,3
\end{matrix}\,\right|+
\left|\,\begin{matrix}
\,\,\,\,1 & \!-1\\
-1 & \!-2
\end{matrix}\,\right|+
\left|\begin{matrix}
-1 & \,\,2\\
\,\,\,\,0 & \!\!-2
\end{matrix}\,\right|+$$
\smallskip
$$\phantom{-------}\!\!\!\!+
\left|\begin{matrix}
-1 & \!2\\
\,\,\,\,1 &\!3
\end{matrix}\,\right|+
\left|\,\begin{matrix}
-1 & \,\,2\\
-1 & \!\!-2
\end{matrix}\,\right|+
\left|\begin{matrix}
\,0 & \!\!-2\\
\,1 & \,\,3
\end{matrix}\,\right|+
\left|\begin{matrix}
\,\,\,\,0 & \!\!-2\\
-1 & \!\!-2
\end{matrix}\,\right|+
\left|\,\begin{matrix}
\,\,\,\,1 & \,\,3\\
-1 & \!\!-2
\end{matrix}\,\right|=
$$\smallskip
$$\!\!\!\!=1-2+4-3+2-5+4+2-2+1=2\,;$$
$$
\,\pi^{-2}\text{Area}\,(\Pi_B)=1+2+4+3+2+5+4+2+2+1=26\,.\phantom{-------}
$$
From Theorem \ref{sats2} we then find the multiplicity $m_B=(1+4+2+4+2+1)/2=7$.
\bigskip

\section{Significance of the multiplicity}

\noindent We shall now relate the multiplicity $m_B$ of the summed chain $\mathcal{A}'_B+\Pi_B$ to the original point configuration $A\subset\mathbf{Z}^n$, $n=N-3$, from which the matrix  $B$ was obtained as a Gale dual. 
\begin{thm}\label{mult} The multiplicity $m_B$ from Theorem~\ref{sats2} satisfies $$m_B=d_B\,, $$where $d_B$ is the normalized volume $n!\Vol(\mathrm{conv}(A))$ of the convex hull of the point configuration $A\subset\mathbf{Z}^n$.
\end{thm}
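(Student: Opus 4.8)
The plan is to identify the integer $m_B = \frac12\sum_{j<k}\det^+(b_j,b_k)$ with a lattice‑geometric invariant of the Gale‑dual configuration $A$, and to recognize that invariant as the normalized volume $n!\Vol(\mathrm{conv}(A))$. The natural tool is the classical combinatorial formula for the volume of the convex hull of $A$ in terms of a Gale transform. Recall (see \cite{GKZ} or \cite{G}) that if $B$ is a Gale transform of $A$ and the rows $b_1,\dots,b_N$ of $B$ lie in $\mathbf{Z}^2$, then the triangulations of $A$ are in bijection with certain chamber decompositions of $\mathbf{R}^2$ cut out by the rays $\mathbf{R}_{\ge0}b_j$, and the normalized volume of a simplex $\mathrm{conv}(\alpha_i:i\in\sigma)$ with $\sigma=\{1,\dots,N\}\setminus\{j,k\}$ equals $|\det(b_j,b_k)|$. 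So the first step is to fix a regular triangulation $\mathcal T$ of $A$ and write $n!\Vol(\mathrm{conv}(A))=\sum_{\sigma\in\mathcal T}|\det(b_{j(\sigma)},b_{k(\sigma)})|$, where $\{j(\sigma),k(\sigma)\}$ is the pair complementary to $\sigma$.

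The second, and main, step is to show that the right‑hand side of that identity equals $\frac12\sum_{j<k}\det^+(b_j,b_k)$. I would do this by choosing the triangulation adapted to the given cyclic (clockwise projective) ordering of the $b_j$: generically, pick a generic direction $\xi$ and let the chamber complex of the $2N$ rays $\pm\mathbf{R}_{\ge0}b_j$ be used to build the \emph{pushing} (or placing) triangulation associated with $\xi$. For two‑dimensional Gale diagrams this triangulation has a completely explicit description: a pair $\{j,k\}$ indexes a simplex of the triangulation precisely when the corresponding complementary circuit is ``positive'' for $\xi$, and a short bookkeeping argument — essentially summing the turning of the polygonal fan spanned by $b_1,\dots,b_N$ — shows that $\sum_{\sigma\in\mathcal T}|\det(b_{j(\sigma)},b_{k(\sigma)})| = \frac12\sum_{j<k}\det^+(b_j,b_k)$. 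Alternatively, and perhaps more robustly, one can avoid triangulations entirely and argue directly: the quantity $\frac12\sum_{j<k}\det^+(b_j,b_k)$ is, by the area formula for the zonotope and Theorem~\ref{sats2}, the number of times the torus is covered, and one can instead compute $d_B$ as a degree. Namely, the Horn–Kapranov map $\Psi$ realizes $\mathcal Z_{D_B}$ birationally, the logarithmic Gauss map $\gamma$ is its inverse, and the degree of $\gamma$ (equivalently, the number of sheets of $\Arg\circ\Psi$ over a generic torus point, which by Theorem~\ref{sats1} and Theorem~\ref{sats2} is $m_B$) is a known expression for the normalized volume $n!\Vol(\mathrm{conv}(A))$; this is exactly the relation between the degree of the dual variety / the class of the discriminant and the volume of the Newton polytope.

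I expect the main obstacle to be the combinatorial bookkeeping in matching the sum over simplices of a chosen triangulation with $\frac12\sum_{j<k}\det^+(b_j,b_k)$: one must be careful that the ``positive'' circuits counted on one side correspond bijectively, and with the right multiplicities $|\det(b_j,b_k)|$, to the lattice volumes of simplices on the other side, and that degenerate configurations (parallel $b_j$, or $b_j$ summing across a half‑plane boundary) are handled — the same degeneracies already flagged in the proofs of Theorem~\ref{sats1} and Theorem~\ref{sats2}. I would reduce to the generic case first (no two $b_j$ parallel, $\xi$ generic), establish the identity there by the turning‑number count, and then note that both sides are continuous (indeed locally constant in the appropriate sense) under the degenerations so that the general case follows. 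A convenient way to organize the generic count is to observe that $\sum_{j<k}\det^+(b_j,b_k)$ only picks up contributions from pairs $b_j,b_k$ spanning a positively oriented cone, and to sweep a ray through the fan, at each $b_k$ adding $\sum_{j<k}\det^+(b_j,b_k)$, which telescopes to twice the sum of normalized areas of the triangles $\mathrm{conv}\{0,b_{j},b_{k}\}$ appearing in the fan triangulation — precisely $2\,n!\Vol(\mathrm{conv}(A))$.
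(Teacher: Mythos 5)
Your proposal follows essentially the same route as the paper: the key ingredient is the Gale-duality expression for $d_B$ as a sum of $|\det(b_j,b_k)|$ over pairs whose cone contains a fixed generic ray (your triangulation formula for $n!\Vol(\mathrm{conv}(A))$ is this same statement, taken by the paper from \cite{DS}), which is then matched against $\tfrac12\sum_{j<k}\det^{+}(b_j,b_k)$ by a combinatorial argument exploiting the clockwise projective ordering. The paper's bookkeeping is somewhat cleaner than your sweep/telescoping sketch: it takes the reference line $L$ almost horizontal, uses \emph{both} rays $L^{\pm}$ so that the Gale formula gives $2d_B$, and then simply observes that a pair $j<k$ contributes $|\det(b_j,b_k)|$ to $\sum_{\mathrm{Cone}\supset L^+}+\sum_{\mathrm{Cone}\supset L^-}$ exactly when $b_j,b_k$ straddle $L$, which by the ordering is exactly when $\det(b_j,b_k)>0$, so the factor $\tfrac12$ falls out immediately.
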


\begin{proof}
Remember that we have ordered the vectors $b_k$ according to decreasing slope, with the normal of $b_1$ having the largest finite slope.
Let $L$ be an almost horizontal line through the origin in $\mathbf{R}^2$ with slope $-\varepsilon$, so that its normal slope $\varepsilon^{-1}$ is larger than $\beta_1=-b_{11}/b_{12}$. Also let $L^+$ and $L^-$ denote the two rays $L\cap\{x\ge0\}$ and $L\cap\{x\le0\}$ respectively. By Gale duality, see for instance \cite{DS}, one knows that
$$d_B\ \ =\!\!\!\!\sum_{\mathrm{Cone}(b_j,b_k)\supset L^+}\!\!\!\!|\det(b_j,b_k)|\ \ \ =\!\!\!\!\sum_{\mathrm{Cone}(b_j,b_k)\supset L^-}\!\!\!\!|\det(b_j,b_k)|\,,$$
where the summation is over all pairs $j<k$ such that the convex cone spanned by the two vectors $b_j$ and $b_k$ contains the ray $L^+$ or $L^-$.   
In order to prove the theorem it will therefore suffice to show that 
\begin{equation}\label{gale}\phantom{--}\sum_{j<k}\det\!^+(b_j,b_k)\ \ \ =\!\!\!\!\sum_{\mathrm{Cone}(b_j,b_k)\supset L^+}\!\!\!\!|\det(b_j,b_k)|\ \ \ +\!\!\!\!\sum_{\mathrm{Cone}(b_j,b_k)\supset L^-}\!\!\!\!|\det(b_j,b_k)|\,.\end{equation}

Notice first that if $b_j$ and $b_k$ lie on the same side of $L$, then because of the special clockwise ordering, the determinant $\det(b_j,b_k)$ is negative (or zero), and the cone 
$\mathrm{Cone}(b_j,b_k)$ does not contain either of the rays $L^+$ or $L^-$. Such pairs $j,k$ therefore give zero contribution to both sides of (\ref{gale}). Suppose next that $b_j$ is below $L$ and $b_k$ is above $L$. Then, since $j<k$, the positive angle from $b_j$ to $b_k$ is $\le\pi$ so the determinant is $\ge0$, and unless the two vectors are parallel, the convex cone $\mathrm{Cone}(b_j,b_k)$ contains the ray $L^+$. We thus get the same contribution $|\det(b_j,b_k)|$ to both sides of (\ref{gale}). The remaining case is similar.
\end{proof}
\bigskip

{\smc Examples.} In our example cases ({\it ii$\,$}) and ({\it iii$\,$}) the Gale duals $A$ can be given by the respective matrices 
$$ A=\left(\,
\begin{matrix} 1 & 1 & 1 & 1 \\ 1 & 2 & 3 &  0 
\end {matrix}\,\right)\quad\text{and}\quad A=\left(\,
\begin{matrix} 1 & 1 & 1 & 1 & 1 \\ 2 & 1 & 0 & 2 & 3 \\ 3 & 2 & 0 & 1 & 2
\end {matrix}\,\right),$$

\noindent which means that the corresponding point configurations are 
$$\{\,0,1,2,3\,\}\subset\mathbf{Z}\quad \text{and} \quad \{\,(0,0),(2,1),(1,2),(3,2),(2,3)\,\}\subset\mathbf{Z}^2\,.$$ 
The content of the identity $m_B=d_B$ from Theorem \ref{mult} in these examples is that the multiplicity $m_B=3$ from case ({\it ii$\,$}) concides with the length of the segment 
$[0,3]$, while $m_B=7$ from  case ({\it iii$\,$}) is equal to twice the area of the convex hull shown on the right in Figure~5.

\vskip-1cm
\begin{figure}[H]\label{fig5}
\begin{center}
\includegraphics[width=10cm]{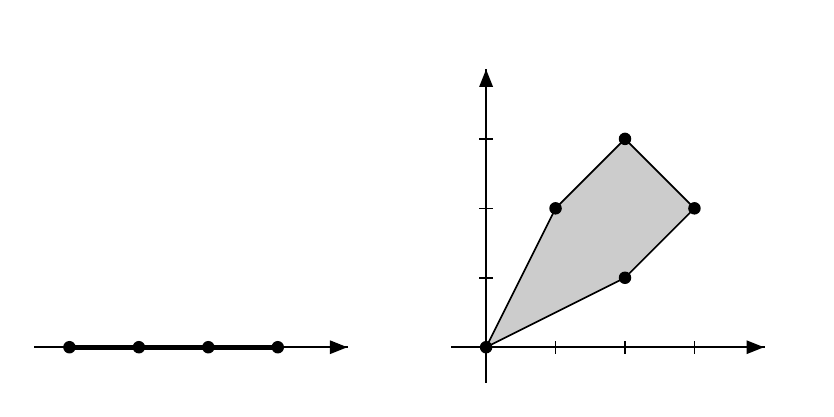}
\end{center}
\caption{\it The convex hull of the configuration $\{0,1,2,3\}$ (left), and of the configuration $\{(0,0),(2,1),(1,2),(3,2),(2,3)\}$ (right).}
\end{figure}

{\it Remark.} Recall that associated to the configuration $A$ there is the toric variety 
$$X_A=\overline{\{(z^{\alpha_1}:z^{\alpha_2}:\ldots:z^{\alpha_N})\,;\, z\in\mathbf{C}_*^n\}\!}\,\,\subset\,\mathbf{C}P_{N-1}\,.$$ 
It follows from Kouchnirenko's theorem that  $d_B$ also equals the degree of $X_A$, see for instance \cite{GKZ}. In the generic case when the projectively dual variety of $X_A$ is a hypersurface, it is in fact equal to the $A$-discriminantal hypersurface $D_A(a)=0$. 
\bigskip\bigskip

\end{document}